\newcommand{\rsp}{\raisebox{0em}[2.4ex][1.5ex]{\rule{0em}{2ex} }}
\newcommand{\fa}{{\mathfrak a}}
\newcommand{\fb}{{\mathfrak b}}
\newcommand{\fp}{{\mathfrak p}}
\newcommand{\fq}{{\mathfrak q}}
\newcommand{\ftw}{{\mathfrak 2}}
\newcommand{\Cl}{{\operatorname{Cl}}}
\newcommand{\Gal}{{\operatorname{Gal}}}
\newcommand{\Z}{{\mathbb Z}}
\newcommand{\Q}{{\mathbb Q}}
\newcommand{\cO}{{\mathcal O}}
\newcommand{\eps}{\varepsilon}
\newcommand{\lra}{\longrightarrow}
\newcommand{\la}{\langle}
\newcommand{\ra}{\rangle}
\newcommand{\tomega}{\widetilde{\omega}}
\newtheorem{thm}{Theorem}[section]
\newtheorem{prop}[thm]{Proposition}
\newtheorem{lem}[thm]{Lemma}
\numberwithin{equation}{section}
\title{Harbingers of Artin's Reciprocity Law. \\
          IV. Bernstein's Reciprocity Law}
\author{F. Lemmermeyer}
\email{hb3@ix.urz.uni-heidelberg.de}
\address{M\"orikeweg 1, 73489 Jagstzell, Germany}
\begin{document}

\maketitle

\markboth{Harbingers of Artin's Reciprocity Law}
         {\today \hfil Franz Lemmermeyer}
\begin{center} \today \end{center}
\bigskip

In the last article of this series (see \cite{FB1}) we will first 
explain how Artin's reciprocity law for unramified abelian extensions 
can be formulated with the help of power residue symbols, and then 
show that, in this case, Artin's reciprocity law was already stated 
by Bernstein \cite{Bern2} in the case where the base field contains 
the roots of unity necessary for realizing the Hilbert class field 
as a Kummer extension. Bernstein's article appeared in 1904, almost
20 years before Artin conjectured his version of the reciprocity law,
and seems to have been overlooked completely.

Let me also remark that although we will present Bernstein's 
reciprocity law only for unramified extensions (Takagi created
the general class field theory dealing with ramified abelian 
extensions long after Bernstein's work), the generalization 
to arbitrary abelian extensions of number fields is straightforward.

With hindsight, the basic idea is this: the Artin isomorphism is a 
decomposition law for abelian extensions $K/k$. By adjoining suitable
roots of unity to $k$, the extension $K/k$ will lift to a Kummer
extension. In Kummer extensions, the decomposition of prime ideals
is governed by power residue symbols. We may therefore harbor some
hope of being able to express the content of Artin's reciprocity law
using power residue symbols. Such a description is easy to give
in the case when the base field contains the roots of unity that
are necessary for writing the Hilbert class field $K$ as a Kummer
extension.

Bernstein's reciprocity law apparently did not play any role at
all in the development of Artin's reciprocity law because it has
not been noticed at all. In Sect. \ref{SBRL} we will give (a 
corrected version of) Bernstein's reciprocity law, and in Sect.
\ref{BApp} we will show that this reciprocity law contains
several classical observations on the quadratic and cubic power 
residue characters of quadratic units.

\section{Bernstein's Reciprocity Law}\label{SBRL}

Let us now see how to formulate Artin's reciprocity law with 
the help of power residue symbols. Before we do so we have to 
recall a few basic properties of singular numbers and power
residue symbols.

\subsection*{Power Residue Symbols.}
Let $h \ge 2$ be an integer, and let $k$ be a number field
containing a primitive $h$-th root of unity $\zeta = \zeta_h$. 
For $\alpha \in k^\times$ and prime ideals $\fp \nmid h\alpha$ 
we define the $h$-th power residue symbol $(\alpha/\fp)_h$ by 
demanding that its values are $h$-th roots of unity satisfying 
the congruence
$$ \Big( \frac{\alpha}{\fp} \Big)_h \equiv \alpha^{(N\fp-1)/h} \bmod \fp. $$
Observe that $N\fp \equiv 1 \bmod h$ since $\Q(\zeta) \subseteq k$.

An element $\alpha \in k^\times$ is called {\em singular} if 
$(\alpha) = \fa^h$ is an $h$-th ideal power of some fractional 
ideal $\fa$. It is called {\em primitive} if $k(\sqrt[h]{\alpha}\,)/k$ 
is unramified at all primes dividing $h$. If $\alpha$ is singular and 
primitive, then $k(\sqrt[h]{\alpha})/k$ is unramified at all finite 
primes (if $h > 2$, the extension is automatically unramified at the 
infinite primes because $k$, as an extension of $\Q(\zeta)$, is 
totally complex).

The main observation we will need is the following classical result:

\begin{lem}
If $\alpha \in k^\times$ is singular and primitive, then the
power residue symbol $(\alpha/\fp)_h$ is well defined for all 
prime ideals $\fp \nmid h$.
\end{lem}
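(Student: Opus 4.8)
The plan is to extend the defining congruence, which a priori makes sense only for $\fp \nmid h\alpha$, to the remaining primes $\fp \mid \alpha$ with $\fp \nmid h$, and then to check that the extended symbol does not depend on the auxiliary choices involved. The operative hypothesis is singularity: writing $(\alpha) = \fa^h$ shows that the valuation $v_\fp(\alpha) = h\,v_\fp(\fa)$ is divisible by $h$ at every prime $\fp$.

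First I would localize at $\fp$. Setting $m = v_\fp(\fa) \ge 0$ and choosing any $\pi \in k^\times$ with $v_\fp(\pi) = 1$, I can write $\alpha = \pi^{hm} u$ with $u$ a $\fp$-adic unit, i.e. $v_\fp(u) = 0$. Since $u \not\equiv 0 \bmod \fp$, the quantity $u^{(N\fp-1)/h} \bmod \fp$ is a genuine $h$-th root of unity in the residue field $\cO_k/\fp$ — here one uses $N\fp \equiv 1 \bmod h$, and the hypothesis $\fp \nmid h$, which guarantees that the $h$-th roots of unity remain distinct modulo $\fp$ — and I would define
$$ \Big( \frac{\alpha}{\fp} \Big)_h \equiv u^{(N\fp-1)/h} \bmod \fp. $$
When $m = 0$ this is literally the original definition, so the two definitions agree on their common domain.

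The main point, and the only place where singularity is essential, is well-definedness: the value must not depend on the uniformizer $\pi$, equivalently on the factorization $\alpha = \pi^{hm}u$. If $\pi' = \pi v$ is another choice with $v$ a $\fp$-adic unit, then comparing $\pi^{hm}u = (\pi')^{hm}u'$ gives $u' = u\,v^{-hm} = u\,(v^{-m})^h$, so the two unit parts differ by the $h$-th power of a unit. Since the residue symbol is multiplicative on units and every $h$-th power has trivial symbol — $\big((v^{-m})^h/\fp\big)_h = \big((v^{-m})/\fp\big)_h^{\,h} = 1$, because the value is an $h$-th root of unity — we conclude $(u'/\fp)_h = (u/\fp)_h$. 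Thus the symbol is well defined. It is precisely the divisibility $h \mid v_\fp(\alpha)$ that forces the ambiguity $v^{-hm}$ to be an $h$-th power; for a non-singular $\alpha$ this exponent would be $v^{-v_\fp(\alpha)}$ with $h \nmid v_\fp(\alpha)$, and the step would break down.

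Finally I would record the conceptual content: for $\fp \nmid h$ the local extension $k_\fp(\sqrt[h]{u})/k_\fp$ cut out by the unit part is unramified, so $\fp$ is unramified in $k(\sqrt[h]{\alpha})/k$ and the extended symbol computes the Frobenius action $\sigma_\fp(\sqrt[h]{\alpha})/\sqrt[h]{\alpha}$. Primitivity is what propagates this unramifiedness to the primes above $h$; but for the present statement, restricted to $\fp \nmid h$, singularity alone does the work. I expect no serious obstacle beyond organizing the independence-of-$\pi$ argument above, which is the crux of the proof.
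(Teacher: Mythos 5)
Your proof is correct, and it runs on the same engine as the paper's: singularity forces $h \mid v_\fp(\alpha)$, so $\alpha$ can be written as an $h$-th power times a $\fp$-unit, and since $h$-th powers of $\fp$-units have trivial symbol, the symbol of the unit part gives an unambiguous value. The difference is in how that $h$-th power is produced. The paper argues globally: from $(\alpha) = \fa^h$ it chooses an ideal $\fb$ in the class of $\fa$ coprime to $\fp$, writes $\fa = \gamma\fb$, and sets $\beta = \alpha\gamma^{-h}$, so that $(\beta) = \fb^h$ is prime to $\fp$ and one may define $(\alpha/\fp)_h := (\beta/\fp)_h$; compatibility is the observation that $(\alpha/\fq)_h = (\beta/\fq)_h$ at every prime where both symbols are defined. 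You argue locally: any $\pi \in k^\times$ with $v_\fp(\pi) = 1$ (e.g.\ $\pi \in \fp \setminus \fp^2$) lets you peel off $\pi^{hm}$ and define the symbol through the unit part $u$. Your version is slightly more elementary, since it does not need the standard fact that every ideal class contains an ideal coprime to a given prime, and you spell out the independence-of-choices verification (a change of uniformizer alters $u$ by the $h$-th power of a $\fp$-unit) that the paper compresses into a ``Clearly''. What the paper's global choice buys is that the proxy $\beta$ is again a singular number whose symbol is simultaneously defined at all primes away from $\beta h$, which is convenient when symbols at several primes are compared at once; your $u$ is tailored to the single prime $\fp$. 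Finally, your closing remark that only singularity (not primitivity) is needed for $\fp \nmid h$ matches the paper exactly: its proof never invokes primitivity either.
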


In fact, fix a prime ideal $\fp \nmid h$. If $\fp \mid (\alpha)$,
then write $(\alpha) = \fa^h$. Choose an ideal $\fb$ in the ideal
class $[\fa]$ generated by $\fa$ such that $\fb$ is coprime to $\fp$.
Then $\fa = \gamma \fb$. Then $\alpha = \gamma^h \beta$ for some
$\beta \in k^\times$ with $(\beta) = \fb^h$. Clearly 
$(\alpha/\fq)_h = (\beta/\fq)_h$ for all prime ideals 
$\fq \nmid \alpha\beta$, and $(\beta/\fp)_h$ is defined since
$\fp \nmid \beta$ by construction.

\subsection*{Bernstein's Reciprocity Law.}

Now we will formulate Bernstein's version of the reciprocity law.
We will distinguish two cases.

\medskip\noindent{\bf Case I.} $\zeta_h \in k$.

Consider an algebraic number field $k$ whose class group is
cyclic of order $h$, and assume that $k$ contains a primitive
$h$-th root of unity $\zeta$. Then the Hilbert class field of 
$K$ has the form $K = k(\sqrt[h]{\omega}\,)$ for some 
$\omega \in k^\times$.

\begin{thm}\label{TB1}
Let $k$ be a number field with cyclic class group $\Cl(k) = \la c \ra$ 
of order $h$, and assume that $k$ contains the $h$-th roots of unity
$\mu_h = \la \zeta \ra$. Then we can choose $\omega \in k^\times$ 
in such a way that $K = k(\sqrt[h]{\omega}\,)$ is the Hilbert class 
field of $k$, and that 
\begin{equation}\label{EB1}
    \bigg(\frac{\omega}{\fp}\bigg) = \zeta^e \qquad \iff \qquad [\fp] = c^e
\end{equation}
for all prime ideals $\fp \nmid h$, where $(\frac{\cdot}{\fp})$ denotes 
the $h$-th power residue symbol in $k$.
\end{thm}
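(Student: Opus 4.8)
The plan is to read (\ref{EB1}) as Artin's reciprocity law for the unramified cyclic extension $K/k$, transported to power residue symbols through the Kummer-theoretic description of the Frobenius. First I would produce the generator. Since $\Gal(K/k)\cong\Cl(k)$ is cyclic of order $h$ and $\mu_h\subseteq k$, Kummer theory gives $K=k(\sqrt[h]{\omega}\,)$ for some $\omega\in k^\times$; and because $K/k$ is everywhere unramified, the valuation $v_\fp(\omega)$ is divisible by $h$ at every prime (the tame part of the ramification forces this even at $\fp\mid h$), so $(\omega)=\fa^h$ is singular, and unramifiedness at the primes above $h$ is exactly primitivity. Hence $\omega$ is singular and primitive, and by the lemma above the symbol $(\omega/\fp)$ is defined for every $\fp\nmid h$, so both sides of (\ref{EB1}) make sense.

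Next I would recall the decomposition law. Fix the generator $\sigma$ of $\Gal(K/k)$ by $\sigma(\sqrt[h]{\omega}\,)=\zeta\,\sqrt[h]{\omega}$, and for $\fp\nmid h\omega$ let $\sigma_\fp$ be the Frobenius automorphism. From the defining congruence $(\omega/\fp)\equiv\omega^{(N\fp-1)/h}\bmod\fp$ one gets, for any prime $\fP$ of $K$ above $\fp$,
\[
   \sigma_\fp(\sqrt[h]{\omega}\,) \equiv (\sqrt[h]{\omega}\,)^{N\fp} = \omega^{(N\fp-1)/h}\,\sqrt[h]{\omega} \equiv \Big(\frac{\omega}{\fp}\Big)\sqrt[h]{\omega} \pmod{\fP},
\]
and since two $h$-th roots of $\omega$ that are congruent modulo $\fP\nmid h$ must coincide, $\sigma_\fp(\sqrt[h]{\omega}\,)=(\omega/\fp)\sqrt[h]{\omega}$; equivalently $\sigma_\fp=\sigma^e$ precisely when $(\omega/\fp)=\zeta^e$. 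For the finitely many $\fp\mid\omega$ with $\fp\nmid h$ the same identity follows after replacing $\omega$ by the coprime element $\beta$ furnished in the proof of the lemma, which generates the same field and carries the same symbol values.

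Finally I would invoke Artin's reciprocity law: the Artin map $\Phi\colon[\fp]\mapsto\sigma_\fp$ induces an isomorphism $\Cl(k)\xrightarrow{\sim}\Gal(K/k)$, so $\Phi(c)=\sigma^a$ for some $a$ with $\gcd(a,h)=1$. Combined with the previous step this yields $[\fp]=c^e \iff \sigma_\fp=\sigma^{ae} \iff (\omega/\fp)=\zeta^{ae}$, which is (\ref{EB1}) up to the factor $a$. The main obstacle is exactly this normalization: the Artin map need not send the chosen generator $c$ to the generator $\sigma$ attached to $\zeta$. I would remove it by exploiting the freedom in $\omega$, replacing $\omega$ by $\omega^{t}$ with $t\equiv a^{-1}\bmod h$. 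Since $\gcd(t,h)=1$, the new element is again singular and primitive with $k(\sqrt[h]{\omega^t}\,)=K$, while $(\omega^t/\fp)=(\omega/\fp)^t$ rescales the exponent so that $(\omega^t/\fp)=\zeta^e \iff [\fp]=c^e$. What remains is the routine check that this replacement preserves the hypotheses, after which $\omega^t$ is the desired element.
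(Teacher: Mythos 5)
Your proof is correct and takes essentially the same route as the paper's: produce the Kummer generator, show the Frobenius acts on $\sqrt[h]{\omega}$ through the power residue symbol, invoke the Artin isomorphism $\Cl(k) \cong \Gal(K/k)$, and fix the normalization by replacing $\omega$ with $\omega^{a^{-1}}$ so that the generator $c$ corresponds to $\zeta$. The extra care you take --- verifying that $\omega$ is singular and primitive so the symbol is defined at all $\fp \nmid h$ via the preceding Lemma, and noting that congruent roots of unity modulo $\fP \nmid h$ must be equal --- only makes explicit what the paper leaves implicit.
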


\begin{proof}
Let $K = k(\sqrt[h]{\omega}\,)$ be the Hilbert class field of $k$.
Let $\fp$ denote a prime ideal in $k$, and let $\sigma$ denote its
Frobenius automorphism. Applying $\sigma$ to $\alpha = \sqrt[h]{\omega}$
we find 
$\alpha^\sigma \equiv \alpha^{N\fp} = \alpha^{N\fp - 1} \alpha
       = \omega^{(N\fp-1)/h} \alpha \equiv 
     (\frac{\omega}{\fp})\ \alpha \bmod \fp$.
Artin's reciprocity law induces an exact sequence
$$ \begin{CD}
    1 @>>> P_k @>>> I_k @>>> \Gal(K/k) @>>> 1,
    \end{CD} $$
where the map $I_k \lra \Gal(K/k)$ is induced by sending a prime ideal $\fp$ to    
its Frobenius automorphism. Since the Artin map only depends on the ideal class
of $\fp$ we get an isomorphism $\Cl(k) \lra \Gal(K/k)$.

Let $\fq$ be a prime ideal in the ideal class $c$; then 
$(\frac{\omega}{\fq}) = \zeta^a$ for some $a$ coprime to $h$. Write 
$ab \equiv 1 \bmod h$; replacing $\omega$ by $\omega^b$ we find that  
$K = k(\sqrt[h]{\omega}\,)$ and $(\frac{\omega}{\fq}) = \zeta$. The 
multiplicativity of the Artin map then guarantees that 
$(\frac{\omega}{\fp}) = \zeta^e$ if and only if $[\fp] \in c^e$.
\end{proof}

Clearly,  Thm. \ref{TB1} is equivalent to Artin's reciprocity law
in unramified abelian extensions of number fields $k$ with cyclic class
group of order $h$ and $\zeta_h \in k$: if (\ref{EB1}) holds, then the
map $\fp \to (\frac{\omega}\fp)$ induces a canonical isomorphism 
$\rho: \Cl(k) \lra \mu_h$ between the ideal class group of $k$ and
the group $\mu_h$ of $h$-th roots of unity. Similarly, mapping
$\sigma \in \Gal(K/k)$ to the root of unity 
$\chi(\sigma) = \sqrt[h]{\omega}^{\sigma-1}$ defines a canonical 
isomorphism $\chi: \Gal(K/k) \lra \mu_h$. Composing $\rho$ with $\chi^{-1}$
then provides us with a canonical isomorphism $\rho$ between the class 
group $\Cl(k)$ and the Galois group $\Gal(K/k)$ of the Hilbert class 
field $K$ of $k$. 

The restriction of Thm. \ref{TB1} to subextensions of the Hilbert class
field can be proved similarly:
 
\begin{thm}\label{TBsp}
Let $\ell$ be a prime number and $k$ a number field whose $\ell$-class
group is cyclic of order $m = \ell^n$. Assume that $k$ contains the
$m$-th roots of unity, and let $K/k$ be the cyclic unramified extension
of degree $m$. 

Let $c$ be an ideal class with order $m$ in $\Cl(k)$. Then we can choose
$\omega \in k^\times$ in such a way that $K = k(\sqrt[m]{\omega}\,)$ and
\begin{equation}\label{EB2}
    \bigg(\frac{\omega}{\fp}\bigg) = \zeta^e
              \qquad \iff \qquad [\fp]^{h/m} = c^e
\end{equation}
for all prime ideals $\fp \nmid h$, where $(\frac{\cdot}{\fp})$ 
denotes the $m$-th power residue symbol in $k$.
\end{thm}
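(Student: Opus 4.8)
The plan is to mimic the proof of Thm.~\ref{TB1} almost verbatim, with the full class group replaced by its $\ell$-part and the $h$-th power residue symbol replaced by the $m$-th one; the one genuinely new feature is that the Artin map now only sees the $\ell$-Sylow subgroup, and the exponent $h/m$ in \eqref{EB2} is exactly the bookkeeping needed to absorb this. First I would realize $K$ as a Kummer extension. Since $K/k$ is cyclic and unramified of degree $m = \ell^n$ and $\mu_m \subseteq k$, Kummer theory together with the characterization of unramified Kummer extensions recalled above lets me write $K = k(\sqrt[m]{\omega_0}\,)$ for some singular and primitive $\omega_0 \in k^\times$. By the Lemma, applied with $m$ in place of $h$, the symbol $(\frac{\omega_0}{\fp})_m$ is then well defined for every $\fp \nmid m$, in particular for every $\fp \nmid h$.

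Next I would translate Frobenius into the power residue symbol exactly as in Thm.~\ref{TB1}. For $\fp \nmid h$ with Frobenius $\sigma = \sigma_\fp \in \Gal(K/k)$ and $\alpha = \sqrt[m]{\omega_0}$, the congruence $\alpha^\sigma \equiv \alpha^{N\fp} \equiv (\frac{\omega_0}{\fp})_m\,\alpha \bmod \fp$ gives $\alpha^{\sigma-1} \equiv (\frac{\omega_0}{\fp})_m \bmod \fp$ (for $\fp \mid \omega_0$ one first passes to a coprime representative as in the proof of the Lemma). Since both sides are $m$-th roots of unity and these remain distinct modulo $\fp$ because $\fp \nmid m$, this forces the equality $\chi(\sigma_\fp) = (\frac{\omega_0}{\fp})_m$, where $\chi \colon \Gal(K/k) \lra \mu_m$, $\sigma \mapsto \alpha^{\sigma-1}$, is the Kummer isomorphism.

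Then I would invoke Artin's reciprocity law for the unramified extension $K/k$. Writing $\Cl(k) = \Cl(k)_\ell \times A$ with $A$ the prime-to-$\ell$ part (of order $h/m$), the hypothesis that $\Cl(k)_\ell = \la c\ra$ is cyclic of order $m$ shows that the unique cyclic quotient of order $m$ is $\Cl(k)/A \cong \Cl(k)_\ell$; hence $K$ is the subfield cut out by $A$, and the Artin map $[\fp] \mapsto \sigma_\fp$ induces the isomorphism $\Cl(k)/A \cong \Gal(K/k)$. Composing with $\chi$ yields a homomorphism $\psi = \chi \circ \mathrm{Art} \colon \Cl(k) \lra \mu_m$ with kernel $A$, so that $\psi([\fp]) = (\frac{\omega_0}{\fp})_m$ and $\psi([\fp]) = \psi(\pi([\fp]))$, where $\pi$ is the projection onto $\Cl(k)_\ell$.

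The final step is the normalization, which is where $h/m$ enters and which I expect to be the only real subtlety. As $c$ generates $\Cl(k)_\ell$ we have $\psi(c) = \zeta^a$ with $\gcd(a,m)=1$. The key arithmetic fact is that $h/m = |A|$ is coprime to $\ell$, hence to $m = \ell^n$, so I can pick $b$ with $\gcd(b,m)=1$ and $ab \equiv h/m \bmod m$ and replace $\omega_0$ by $\omega = \omega_0^{\,b}$: this leaves $K = k(\sqrt[m]{\omega}\,)$ unchanged and replaces $\psi$ by $\psi^{\,b}$, so that $\psi(c) = \zeta^{h/m}$. Writing $\pi([\fp]) = c^t$, one then has $(\frac{\omega}{\fp})_m = \psi(c^t) = \zeta^{(h/m)t}$ on one side and $[\fp]^{h/m} = \pi([\fp])^{h/m} = c^{(h/m)t}$ on the other, since $A$ is annihilated by $h/m$; both single out the same exponent $e \equiv (h/m)t \bmod m$, which is precisely \eqref{EB2}. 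The main point to get right is that raising to the $(h/m)$-th power simultaneously kills $A$ and acts invertibly on $\Cl(k)_\ell$, and that this invertibility is exactly what makes the normalization $\psi(c) = \zeta^{h/m}$ achievable.
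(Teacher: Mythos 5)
Your proposal is correct and follows essentially the same route as the paper, whose entire proof of Theorem~\ref{TBsp} is the remark ``the proof proceeds exactly as above,'' i.e.\ the Frobenius--Kummer translation $\chi(\sigma_\fp) = (\frac{\omega}{\fp})_m$ combined with the Artin isomorphism, exactly as you carry it out. Your write-up in fact makes explicit the one detail the paper glosses over: a verbatim repetition of the Theorem~\ref{TB1} normalization $(\frac{\omega}{\fq}) = \zeta$ for $\fq \in c$ would prove the equivalence with $[\fp]$ projected to the $\ell$-part on the right, not (\ref{EB2}) as stated, and your choice of $b$ with $ab \equiv h/m \bmod m$ (legitimate precisely because $h/m$ is coprime to $\ell$) is the correct adjustment that makes the stated exponent $h/m$ come out.
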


The proof proceeds exactly as above.

\medskip\noindent{\bf Case II.} $\zeta_h \not\in k$.

If $k$ does not contain the roots of unity necessary for defining the
$h$-th power residue symbol, the situation is slightly more involved.
In this case, assume that $\Cl(k)$ is cyclic of order $h$ and set 
$k' = k(\zeta)$ for some primitive $h$-th root of unity. The
Hilbert class field $k^1/k$ becomes a Kummer extension over $k'$, and
we can write $k'k^1 = k'(\sqrt[h]{\omega}\,)$ for a suitable 
$\omega \in k'$. Observe that the degree $(k'(\sqrt[h]{\omega}\,):k)$
divides the class number $h$; if a subextension of $k'/k$ is unramified, 
this degree will be strictly smaller than $h$. 

In fact, set $\nu = (k^1 \cap k':k)$; class field theory predicts that
the group $N_{k'/k}\Cl(k')$ has index $\nu$ in the class group $\Cl(k)$.
We will see below that the description via power residue symbols will
allow us to characterize only those ideal classes that belong to this
subgroup  $N_{k'/k}\Cl(k')$ of index $\nu$ in $\Cl(k)$. 
\smallskip

\begin{minipage}{3.5cm}
\begin{diagram}[height=0.5cm,width=0.7cm]
        &         &  k'k^1 \\
        & \ruLine & \dLine  \\
   k^1  &         & k'     \\
 \dLine & \ruLine &        \\
 k^1 \cap k' &    &        \\
 \dLine &         &        \\
  k     &         & 
\end{diagram}
\end{minipage}
\begin{minipage}{8.5cm}
Recall that $k^1/k$ is a class field with conductor $1$ for the ideal
group $P_k$ of principal ideals in $k$. By the Translation Theorem
of class field theory, the extension $k^1k'/k'$ is also a class field
with conductor $1$ for the ideal group 
$$ H_{k'} = \{\fa' \in I_{k'}: N_{k'/k} \fa' \in P_k\} $$
consisting of all ideals whose norms down to $k$ are principal.
\end{minipage}
\smallskip

For formulating (a piece of) Artin's reciprocity law using power residue
symbols we use the well known transfer formula:

\begin{lem} 
Let $L/K$ be a finite abelian extension of number fields.
If $F/K$ is a finite extension, then $FL/F$ is abelian, and we have,
for all prime ideals $\fp$ unramified in $F$,  
\begin{equation}\label{Atrans}
        \Big(\frac{FL/F}{\fq} \Big)\Big|_L 
            =  \Big(\frac{L/K}{N_{F/K}\fq} \Big)
            = \Big(\frac{L/K}{\fp} \Big)^f,
\end{equation}
where $\fq$ is a prime ideal in $F$ above $\fp$, and where
$f = f(\fq|\fp)$ is the inertia degree of $\fq$. If, in particular,
$F/K$ is a subextension of $L/K$, then 
$$ \Big(\frac{L/F}{\fq} \Big) 
            =  \Big(\frac{L/K}{N_{F/K}\fq} \Big)
            = \Big(\frac{L/K}{\fp} \Big)^f. $$
\end{lem}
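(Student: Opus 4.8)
The plan is to reduce everything to the defining congruence of the Frobenius automorphism together with a uniqueness argument. First I would fix the primes: choose a prime $\mathfrak{Q}$ of $FL$ above $\fq$ and set $\fP = \mathfrak{Q}\cap L$, so that $\fP$ lies above $\fp$. Since $L/K$ is abelian and $FL$ is the compositum, restriction furnishes the natural isomorphism $\Gal(FL/F) \to \Gal(L/L\cap F)$; in particular $FL/F$ is abelian and $\Gal(FL/F)$ is identified with a subgroup of $\Gal(L/K)$. The symbol $\big(\frac{FL/F}{\fq}\big)$ is then defined as soon as $\fq$ is unramified in $FL$, which follows from $\fp$ being unramified in $F$ and in $L$ (the latter being automatic in the intended application, where $L/K$ is the Hilbert class field and hence unramified everywhere).

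Next I would record the norm computation, which is what produces the exponent $f$. Writing $f = f(\fq|\fp)$, transitivity of the norm gives
$$ N\fq = N_{F/\Q}\fq = N_{K/\Q}(N_{F/K}\fq) = N_{K/\Q}(\fp^{f}) = (N\fp)^f. $$

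The heart of the argument is the restriction identity $\big(\frac{FL/F}{\fq}\big)\big|_L = \big(\frac{L/K}{\fp}\big)^f$. Put $\Phi = \big(\frac{FL/F}{\fq}\big)$ and $\phi = \big(\frac{L/K}{\fp}\big)$. By definition $\Phi(x) \equiv x^{N\fq} \pmod{\mathfrak{Q}}$ for all $x \in \cO_{FL}$; restricting to $x \in \cO_L$ and using $N\fq = (N\fp)^f$ together with $\mathfrak{Q}\cap L = \fP$ yields $\Phi(x) \equiv x^{(N\fp)^f} \pmod{\fP}$. On the other hand, iterating the defining congruence for $\phi$ gives $\phi^f(x) \equiv x^{(N\fp)^f} \pmod{\fP}$. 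Thus $\Phi|_L$ and $\phi^f$ induce the same automorphism of the residue field $\cO_L/\fP$. Since $\Phi$ fixes $\mathfrak{Q}$, its restriction fixes $\fP$ and therefore lies in the decomposition group $D(\fP|\fp) \subseteq \Gal(L/K)$, which also contains $\phi^f$. Because $\fp$ is unramified in $L$, the inertia group is trivial and the reduction map $D(\fP|\fp) \to \Gal\big((\cO_L/\fP)/(\cO_K/\fp)\big)$ is injective, forcing $\Phi|_L = \phi^f$.

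The two remaining equalities are then bookkeeping. Multiplicativity of the Artin symbol on ideals gives $\big(\frac{L/K}{N_{F/K}\fq}\big) = \big(\frac{L/K}{\fp^f}\big) = \big(\frac{L/K}{\fp}\big)^f$, which combined with the restriction identity establishes (\ref{Atrans}). The special case where $F/K$ is a subextension of $L/K$ is immediate: there $FL = L$, so $\big(\frac{FL/F}{\fq}\big)$ is just $\big(\frac{L/F}{\fq}\big) \in \Gal(L/F) \subseteq \Gal(L/K)$ and restriction to $L$ is the identity. The one step that genuinely requires care — and the only place where unramifiedness is used — is the injectivity of the reduction map in the last line of the restriction identity; everything else is the defining property of the Frobenius and transitivity of norms.
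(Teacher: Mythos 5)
Your proof is correct and complete. Note that the paper offers no proof of this lemma at all --- it is simply invoked as ``the well known transfer formula'' --- so there is nothing internal to compare against; your argument is the standard one, and it is sound. The core is the uniqueness argument: both $\Phi|_L$ and $\phi^f$ lie in the decomposition group $D(\fP|\fp)$, both induce $\bar{x} \mapsto \bar{x}^{(N\fp)^f}$ on $\cO_L/\fP$ (using $N\fq = (N\fp)^f$ and $\mathfrak{Q}\cap\cO_L = \fP$), and triviality of inertia makes the reduction map $D(\fP|\fp) \to \Gal\bigl((\cO_L/\fP)/(\cO_K/\fp)\bigr)$ injective, forcing $\Phi|_L = \phi^f$; the remaining equality is just multiplicativity of the Artin symbol applied to $N_{F/K}\fq = \fp^f$. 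Two small remarks. First, you rightly flagged that the hypothesis as printed ($\fp$ unramified in $F$) is not what makes the symbols well defined: what one needs is $\fp$ unramified in $L$, which both legitimizes $\phi$ and forces $\fq$ to be unramified in $FL$ (restriction embeds the inertia group of $\mathfrak{Q}$ in $\Gal(FL/F)$ into that of $\fP$ in $\Gal(L/K)$); in the paper's application $L$ is the Hilbert class field, so this is automatic, as you observed. Second, your closing claim that unramifiedness enters only in the injectivity step is slightly overstated --- it is also needed for the two Frobenius symbols to exist at all --- but that is a rhetorical quibble, not a gap.
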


In our case, (\ref{Atrans}) says that 
$$ \Big(\frac{k^1k'/k'}{\fa'} \Big)\Big|_k 
    = \Big(\frac{k^1/k}{N_{k'/k}\fa'} \Big). $$
Let $c$ be an ideal class generating the cyclic group $\Cl(k)$; then $c^\nu$
is a norm of some class $C \in \Cl(k')$, and $c^\nu$ generates the subgroup
$N_{k'/k} \Cl(k')$ of $\Cl(k)$. Let $\fp'$ be a prime ideal in $C$; as in 
Case~I we can choose $\omega \in k'$ in such a way that 
$(\frac{\omega}{\fp'}) = \zeta^\nu$, where $\zeta$ is a primitive $h$-th 
root of unity\footnote{Observe that $\omega$ is a $\nu$-th power,
so we only get $h/\nu$-th roots of unity on both sides.}. Thus we get

\begin{thm}[Bernstein's Reciprocity Law]
Let $k$ be a number field whose class group $\Cl(k) =  \la c \ra$ is cyclic
of order $h$, and let $k' = k(\zeta)$, where $\zeta$ is a primitive $h$-th 
root of unity. Then we can choose $\omega \in k'$ in such a way that the
Hilbert class field $K$ of $k$ lifts to the Kummer extension 
$Kk' = k'(\sqrt[h]{\omega}\,)$, and that for prime ideals $\fp'$ in $k'$
we have 
\begin{equation}\label{EB3}
 \Big(\frac{\omega}{\fp'}\Big) = \zeta^e 
             \quad \iff \quad  N_{k'/k} \fp' \in c^e
\end{equation}
for all prime ideals $\fp \nmid h$, where $(\frac{\omega}{\cdot})$ 
denotes the $h$-th power residue symbol in $k'$.
\end{thm}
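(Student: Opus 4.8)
The plan is to transplant the Frobenius computation of Thm.~\ref{TB1} from $k$ to the field $k'$, and then to descend to $k$ by means of the transfer formula, tracking the norm map throughout.

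First I would fix the Kummer generator and record the local behaviour of the symbol. Since $K/k$ is unramified at every prime, so is its base change $Kk'/k'$; as $\zeta \in k'$ and $\Gal(Kk'/k') \cong \Gal(K/(K\cap k'))$ is cyclic of order $h/\nu$, Kummer theory furnishes an $\omega \in k'^\times$ with $Kk' = k'(\sqrt[h]{\omega}\,)$. Because $Kk'/k'$ is unramified everywhere, the valuation of $\omega$ at each prime is divisible by $h$, so $\omega$ is singular and primitive; by the Lemma guaranteeing well-definedness for singular primitive numbers, $(\frac{\omega}{\fp'})$ is then defined for every prime $\fp' \nmid h$, including those dividing $\omega$. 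Moreover $\omega^{h/\nu}$ is an $h$-th power, whence $(\frac{\omega}{\fp'})^{h/\nu} = 1$ and the symbol always lands in $\mu_{h/\nu}$. Exactly as in the proof of Thm.~\ref{TB1}, writing $\alpha = \sqrt[h]{\omega}$ and letting $\sigma'$ be the Frobenius of $\fp'$ in $Kk'/k'$, one finds $\alpha^{\sigma'} \equiv (\frac{\omega}{\fp'})\,\alpha \bmod \fP'$ for a prime $\fP'$ of $Kk'$ above $\fp'$; thus $\sigma' \mapsto \alpha^{\sigma'-1} = (\frac{\omega}{\fp'})$ realises the canonical isomorphism $\chi : \Gal(Kk'/k') \lra \mu_{h/\nu}$.

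Next I would descend by the transfer formula. Restricting the Artin symbol of $\fp'$ in $Kk'/k'$ to $K$ and applying (\ref{Atrans}) gives $\sigma'\big|_K = (\frac{K/k}{N_{k'/k}\fp'})$; since $K/k$ is the Hilbert class field, the right-hand side is the image of the ideal class $[N_{k'/k}\fp']$ under the Artin isomorphism $\Cl(k) \cong \Gal(K/k)$. The class $[N_{k'/k}\fp']$ is a norm class and therefore always lies in $N_{k'/k}\Cl(k') = \la c^\nu\ra$ --- this is precisely why the power residue symbol can detect an ideal class only through the subgroup of index $\nu$. Composing the isomorphism $\Gal(Kk'/k') \lra \la c^\nu\ra$ induced by $\fp' \mapsto [N_{k'/k}\fp']$ with $\chi^{-1}$ identifies $\la c^\nu\ra$ with $\mu_{h/\nu}$. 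It then remains to normalise $\omega$: choosing a prime $\fp'_0$ in the class $C$ with $N_{k'/k}C = c^\nu$, its Frobenius generates $\Gal(Kk'/k')$, so $(\frac{\omega}{\fp'_0}) = \zeta^{\nu u}$ is a primitive $(h/\nu)$-th root of unity with $\gcd(u, h/\nu)=1$; picking $b$ with $bu \equiv 1 \bmod h/\nu$ and replacing $\omega$ by $\omega^b$ --- which leaves $Kk' = k'(\sqrt[h]{\omega}\,)$ unchanged, $b$ being prime to the order $h/\nu$ of $\omega$ --- arranges $(\frac{\omega}{\fp'_0}) = \zeta^\nu$. For an arbitrary prime $\fp'$ with $[N_{k'/k}\fp'] = c^{\nu j}$, the multiplicativity of the Artin symbol forces $(\frac{\omega}{\fp'}) = \zeta^{\nu j}$, which is exactly (\ref{EB3}).

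I expect the genuine content to be the conceptual step rather than any single computation: the transfer formula (\ref{Atrans}) is what routes the symbol through $N_{k'/k}$, so that only the norm subgroup $\la c^\nu\ra$ of index $\nu$ becomes visible, and the factor $\nu$ must be carried consistently on both sides (reflecting that neither side is sensitive to exponents modulo $\nu$). The only other point requiring care is checking that $\omega$ may indeed be taken singular and primitive, so that the Lemma legitimises the symbol at the primes dividing $\omega$; everything else is the bookkeeping of composing the two isomorphisms $\chi$ and $N_{k'/k}$ out of $\Gal(Kk'/k')$.
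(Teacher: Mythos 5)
Your proposal is correct and takes essentially the same route as the paper: the transfer formula (\ref{Atrans}) is the key lemma routing the symbol through $N_{k'/k}$, the Frobenius computation of Thm.~\ref{TB1} identifies $\big(\frac{\omega}{\fp'}\big)$ with the Frobenius of $\fp'$ in $Kk'/k'$, and $\omega$ is normalized exactly as in the paper via a prime in a class $C \in \Cl(k')$ whose norm is $c^\nu$. You merely make explicit what the paper leaves implicit --- the well-definedness of the symbol via singularity and primitivity of $\omega$, the injectivity of restriction $\Gal(Kk'/k') \to \Gal(K/K\cap k')$ standing in for the paper's appeal to the Translation Theorem, and the final multiplicativity step.
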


The extension of Bernstein's reciprocity law to number fields whose class
groups are not necessarily cyclic is purely formal:

\begin{thm}[Bernstein's Reciprocity Law]
Let $k$ be a number field whose class group $\Cl(k) =  \la c_1, \ldots, c_q \ra$
is the direct sum of groups $C_j = \la c_j \ra$ of prime power order $h_j$
(then the class number $h$ of $k$ is given by $h = h_1 \cdots h_q$).
Let $\zeta_j$ denote a primitive $h_j$-th root of unity, and set 
$k' = k(\zeta_1, \ldots, \zeta_q)$.

Then there exist elements $\omega_1, \ldots, \omega_q \in k'$
such that $k'(\sqrt[h_1]{\omega_1}, \ldots, \sqrt[h_q]{\omega_q}\,)$
is the compositum of $k'$ and the Hilbert class field of $k$.
Moreover, the $\omega_j$ can be chosen in such a way that
$$ \bigg(\frac{\omega_j}{\fp'}\bigg) = \zeta_j^{e_j}
 \text{for } j=1, \ldots, q \quad \text{if and only if} \quad
   [N_{k'/k}\fp'] = c_1^{e_1} \cdots c_q^{e_q}$$
for all prime ideals $\fp \nmid h$, where $(\frac{\,\cdot\,}{\cdot})$ 
denotes the $h$-th power residue symbol in $k'$.
\end{thm}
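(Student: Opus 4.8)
The plan is to reduce the non-cyclic statement to the cyclic case (the previous theorem) by splitting the class group into its prime-power cyclic factors and treating each factor independently. Since $\Cl(k) = C_1 \oplus \cdots \oplus C_q$ with $C_j = \la c_j \ra$ of prime-power order $h_j$, the Hilbert class field $K$ of $k$ decomposes as a compositum $K = K_1 \cdots K_q$, where $K_j/k$ is the unramified cyclic extension of degree $h_j$ corresponding, via the Artin isomorphism $\Cl(k) \cong \Gal(K/k)$, to the quotient $\Cl(k)/\prod_{i \ne j} C_i$. Over $k'$ each $K_jk'/k'$ is a cyclic Kummer extension of degree dividing $h_j$, so we may write $K_jk' = k'(\sqrt[h_j]{\omega_j})$ for suitable $\omega_j \in k'$.

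First I would apply the cyclic Bernstein reciprocity law (the preceding theorem) to each factor separately. For the class $c_j$ generating $C_j$, the argument there produces an $\omega_j \in k'$ with $K_jk' = k'(\sqrt[h_j]{\omega_j})$ and
\begin{equation*}
    \Big(\frac{\omega_j}{\fp'}\Big)_{h_j} = \zeta_j^{e_j}
        \quad \iff \quad \text{the $C_j$-component of } [N_{k'/k}\fp'] \text{ equals } c_j^{e_j}.
\end{equation*}
The key point is that the power residue symbol $(\omega_j/\fp')$ detects only the projection of the ideal class $[N_{k'/k}\fp']$ onto the factor $C_j$, because $K_jk'$ is precisely the fixed field corresponding to the complementary factors. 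Here I would invoke the transfer formula (\ref{Atrans}) exactly as in Case~II to pass between the Artin symbol for $K_jk'/k'$ and the Artin symbol for $K_j/k$ evaluated at the norm $N_{k'/k}\fp'$, so that the residue symbol over $k'$ reads off the decomposition of $N_{k'/k}\fp'$ in $K_j/k$.

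The final step is to assemble the $q$ independent conditions. Since $\Cl(k) = C_1 \oplus \cdots \oplus C_q$, an ideal class is completely determined by its $q$ projections, so the conjunction of the equivalences above yields
\begin{equation*}
    \Big(\frac{\omega_j}{\fp'}\Big) = \zeta_j^{e_j} \ \text{for all } j
        \quad \iff \quad [N_{k'/k}\fp'] = c_1^{e_1} \cdots c_q^{e_q},
\end{equation*}
which is the assertion. I expect the main obstacle to be bookkeeping rather than conceptual: one must verify that the various $\omega_j$ can be chosen simultaneously (the choices in the separate cyclic applications do not interfere, since they govern disjoint cyclic subextensions whose compositum is $Kk'$), and that the power residue symbol $(\omega_j/\fp')$ genuinely annihilates the contributions of the other factors $C_i$ ($i \ne j$). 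The latter follows because $\omega_j$ is, up to $h_j$-th powers, supported on the Kummer data for $K_jk'$ alone, so any prime whose norm lies in $\prod_{i \ne j} C_i$ splits completely in $K_jk'/k'$ and hence has trivial symbol. Once the linear-disjointness of the $K_j$ over $k$ is recorded, the proof is a routine componentwise repetition of the cyclic argument.
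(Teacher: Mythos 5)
Your proposal is correct and follows exactly the route the paper intends: the paper gives no explicit proof, stating only that the extension to non-cyclic class groups is ``purely formal,'' and your componentwise reduction --- splitting $\Cl(k)$ into its cyclic prime-power factors $C_j$, taking $K_j$ to be the unramified cyclic extension fixed by the complementary factors, running the Case~II argument (transfer formula plus Kummer-theoretic normalization of $\omega_j$) for each $j$, and assembling the $q$ independent equivalences --- is precisely that formal reduction. The only point worth recording explicitly is that you repeat the cyclic \emph{argument} for each class field $K_j/k$ rather than quote the cyclic \emph{theorem} (whose hypothesis that $\Cl(k)$ itself be cyclic fails here), which you in fact do.
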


The connection between explicit reciprocity laws for power residue 
symbols and Artin's reciprocity law was used by Artin himself to
prove special cases of his reciprocity law in \cite{ArtL}. Artin
was able to prove his reciprocity law for general abelian extensions
only after Chebotarev provided the key idea in his proof what 
became known as Chebotarev's density theorem.

Bernstein's formulation of his reciprocity law in \cite{Bern2}
is only correct for number fields containing the appropriate
roots of unity:

\medskip\noindent{\bf Bernstein's Formulation.}
{\em Let $k$ be a number field with class number $h$. Decompose the
class group $\Cl(k) = C_1 \oplus \cdots \oplus C_q$ into groups
$C_j$ of prime power order $h_j$ (then $h = h_1 \cdots h_q$), and
pick generators $c_j$ of the $C_j$. Let $\zeta_j$ be a primitive
$h_j$-th root of unity, let $k' = k(\zeta_1, \ldots, \zeta_q)$,
and put $r = (k':k)$.

Then there exist elements $\omega_1, \ldots, \omega_q \in k'$
such that $k'(\sqrt[h_1]{\omega_1}, \ldots, \sqrt[h_q]{\omega_q}\,)$
is the compositum of $k'$ and the Hilbert class field of $k$.
Moreover, the $\omega_j$ can be chosen in such a way that the
relations 
$$ \bigg(\frac{\omega_1^{(\rho)}}{\fp'}\bigg) = \zeta_1^{e_{1,\rho}},  \ldots,
   \bigg(\frac{\omega_q^{(\rho)}}{\fp'}\bigg) = \zeta_q^{e_{q,\rho}} $$
are the necessary and sufficient conditions for
$$ {} [N_{k'/k}\fp'] = c_1^{e_1} \cdots c_q^{e_q}, $$
where}
$$ e_1 = \sum_{\rho=1}^r e_{1,\rho}, \ \ldots, \
   e_q = \sum_{\rho=1}^r e_{q,\rho}. $$

\subsection{Power Residue Symbols}
It is clear that the right hand side in the correspondence (\ref{EB3})
does not change if we replace $\fp'$ by one of its conjugates. Let us
therefore convince ourselves that the left hand side is also invariant.

To this end, assume that $(\frac{\omega}{\fp'}) = \zeta^e$ and let
$\sigma$ denote an automorphism of $Kk'/k'$. Then 
$k'(\sqrt[h]{\omega^\sigma}\,) = k'(\sqrt[h]{\omega}\,)$ implies that
$\omega^\sigma = \alpha^h\omega^a$ for $\alpha \in k'$ and some exponent
$a$ coprime to $h$. Since $k^1/k$ is abelian, it follows from Kummer
theory (see e.g. \cite[Satz 147]{HZB} or \cite[Lemma 14.7]{Wash}) 
that we also must have $\zeta^\sigma = \zeta^a$ for the same value of $a$. 

The basic formalism of power residue symbols (see \cite[Ch. 4]{LRL}) 
now shows
$$ \Big(\frac{\omega}{\fp'}\Big)^\sigma 
       = \Big(\frac{\omega^\sigma }{{\fp'}^\sigma }\Big)
       = \Big(\frac{\omega^a}{{\fp'}^\sigma }\Big)
       = \Big(\frac{\omega}{{\fp'}^\sigma }\Big)^a
       = \Big(\frac{\omega}{{\fp'}^\sigma }\Big)^\sigma, $$
hence
\begin{equation}\label{EBprs}
 \Big(\frac{\omega}{\fp'}\Big) = \Big(\frac{\omega}{{\fp'}^\sigma }\Big) 
\end{equation}
as expected. 

Now let $\tau = \sigma^{-1}$; then 
$$  \Big(\frac{\omega}{\fp'}\Big)^\sigma 
     = \Big(\frac{\omega}{{\fp'}^\tau}\Big)^\sigma 
     = \Big(\frac{\omega^\sigma}{\fp'}\Big). $$
This implies that if $(\omega/\fp') = \zeta^e$, then
$$ \prod_{\sigma \in \Gal(Kk'/k')} \Big(\frac{\omega^\sigma}{\fp'}\Big)
   = \prod_{\sigma \in \Gal(Kk'/k')} \Big(\frac{\omega}{\fp'}\Big)^\sigma
   = N_{k'/k}(\zeta^e) = 1. $$
  
This means that in the special case where $\Cl(k)$ is cyclic or
prime order $p$ and where $k$ does not contain the $p$-th roots 
of unity, the product of the power residue symbols in
Bernstein's formulation is always trivial, that is, we have
$e_1 = 0$. This shows that Bernstein's formulation of his
reciprocity law is incorrect in the case where roots of unity
have to be adjoined.

\section{Power Residue Characters of Quadratic Units}\label{BApp}

The corrected formulation of Bernstein's reciprocity law is 
much more than a twisted version of Artin's reciprocity law;
in fact, Bernstein's reciprocity law contains many results
on the power residue characters of quadratic units obtained by
Dirichlet, Kronecker, Scholz, Aigner and a host of other
mathematicians in the wake of Emma Lehmer's work on these topics
in the 1970s. 

In this section we will be content with giving a few examples
that show how to apply Bernstein's reciprocity law to such problems. 
This approach can be found in various articles on the power residue
characters of units, for example in Halter-Koch \cite{HaKo}.

Before we begin let us recall the relevant notation. Let $k$ be a
number field containing the $m$-th roots of unity, and let $\eps$
be a unit in the ring of integers $\cO_k$. Define the $m$-th power
residue symbol $(\alpha/\fp)_m$ for prime ideals $\fp$ in $\cO_k$
coprime to $m\alpha$ by $(\alpha/\fp)_m = \zeta_m^r$ if 
$\alpha^{(N\fp-1)/m} \equiv \zeta_m^r \bmod \fp$ (observe that
$N\fp \equiv 1 \bmod m$ since $k$ contains the $m$-th roots of unity).
If the symbol $(\eps/\fp)_m$ does not depend on the choice of the 
prime ideal $\fp$ above $p$, then we set $(\eps/p)_m = (\eps/\fp)_m$.

\subsection{Dirichlet}
The following example due to Dirichlet shows that Bernstein's reciprocity
law is weaker than Artin's if $\nu \ne 1$.
\smallskip

\begin{minipage}{4.5cm}
\begin{diagram}[height=0.7cm,width=0.7cm]
   &         &    K   &         &   \\
   &         & \dLine &         &   \\
   &         &    k'  &         &   \\
   & \ruLine & \dLine & \luLine &   \\
 k &         & \Q(i)  &         & \Q(\sqrt{p}\,) \\
   & \rdLine & \dLine & \ruLine &   \\
   &         &  \Q    &         &   
\end{diagram}
\end{minipage}
\begin{minipage}{7.5cm}
Let $p \equiv 1 \bmod 8$ be prime, write $p = a^2 + 4b^2$ and let $\eps$
denote the fundamental unit of $\Q(\sqrt{p}\,)$. The $2$-class group
of $k = \Q(\sqrt{-p}\,)$ is cyclic of order divisible by $4$. The unique
cyclic quartic unramified extension $K/k$ contains the quadratic
subextension $k' = k(i)$; this implies $K' = K$, hence the extension
$K'/k'$ is quadratic, and we have $\nu = 2$ in this case.

We will see below that $K = k(\sqrt{\eps}\,)$, and that $K/k$ is a cyclic
quartic extension. The extension $L/\Q'$ with $\Q' = \Q(i)$, on the other 
hand, is a biquadratic extension since $L = \Q'(\sqrt{a+2bi},\sqrt{a-2bi}\,)$.
\end{minipage}
\smallskip

Applying Bernstein's reciprocity law (more exactly the special case 
Thm. \ref{TBsp}) to this situation we find

\begin{prop}
Let $p \equiv 1 \bmod 8$ be prime, write $p = a^2 + 4b^2$, let 
$\eps > 1$ denote the fundamental unit of $\Q(\sqrt{p}\,)$, and let
$h$ be the class number of $k = \Q(\sqrt{-p}\,)$. Then 
$k(\sqrt{\eps}\,)/k$ is a cyclic quartic unramified extension of $k$
containing $k' = k(i)$ as its quadratic subextension, and we have
$$ \Big(\frac{\eps}{q}\Big) = \begin{cases}
                +1 & \text{ if } q^{h/4} = x^2 + py^2, \\
                -1 & \text{ if } q^{h/4} = 2x^2 + 2xy + \frac{p+1}2 y^2. 
                              \end{cases} $$
\end{prop}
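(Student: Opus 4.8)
The plan is to establish the field-theoretic assertions first, then read the power residue condition off Bernstein's reciprocity law, and finally translate the resulting ideal-class condition into the language of binary quadratic forms. Since $\sqrt{-p}/\sqrt p = i$, we have $k' = k(i) = \Q(i,\sqrt p\,)$, which is exactly the genus field of $k$ (the discriminant factors as $-4p = (-4)\cdot p$ into two prime discriminants, using $p\equiv 1\bmod 4$); hence $k'/k$ is unramified quadratic and $\nu = (k^1\cap k':k) = 2$. As $\eps\in\Q(\sqrt p\,)\subset k'$ but $\eps\notin k$, we get $k(\eps) = k'$, so $K = k(\sqrt\eps\,)$ lies above $k'$ with $[K:k']\le 2$; checking $\eps\notin(k')^2$ shows $K/k$ is quartic. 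That $K/k$ is unramified follows because $\eps$ is a unit, hence singular, and primitive: the norm condition $N\eps = -1$ together with $p\equiv 1\bmod 8$ makes $\eps$ a square modulo $4$ in $k'$, so $k'(\sqrt\eps\,)/k'$ is unramified at $2$, and there is no other ramification. Finally $K/k$ is \emph{cyclic}, not biquadratic: since the $2$-class group of $k$ is cyclic of order $\ge 4$, its unramified quartic extension is unique and cyclic; alternatively, a lift $\tau$ of the nontrivial automorphism of $\Q(\sqrt p\,)/\Q$ satisfies $\tau(\sqrt\eps\,) = i/\sqrt\eps$ and $\tau^2(\sqrt\eps\,) = -\sqrt\eps$, so $\tau$ has order $4$.

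Next I would apply Bernstein's reciprocity law. Because $i\notin k$ we are in Case~II with $\nu = 2$, so I would use Thm.~\ref{TBsp} over $k'$ together with the transfer formula \eqref{Atrans}, taking $\omega = \eps$ and the \emph{quadratic} symbol in $k'$. The independence of $(\eps/q)$ from the choice of $\fq'\mid q$ is precisely \eqref{EBprs}. Writing $\fq$ for the prime of $k$ below $\fq'$ and using $k'\subset K$, formula \eqref{Atrans} gives
$$ \Big(\frac{K/k'}{\fq'}\Big)\big|_{K} = \Big(\frac{K/k}{N_{k'/k}\fq'}\Big), $$
so $(\eps/q) = +1$ iff $\fq'$ splits in $K/k'$ iff $N_{k'/k}\fq'$ has trivial Frobenius in $K/k$. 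By the Artin isomorphism $\Cl(k)\lra\Gal(K/k)$ restricted to the quartic quotient (as in \eqref{EB2} with $m = 4$), this reads $[\fq]^{h/4} = 1$, while $(\eps/q) = -1$ corresponds to $[\fq]^{h/4} = c^{h/2}$, the unique class of order~$2$. The factor $\nu = 2$ is exactly why only these two of the four possible values of $[\fq]^{h/4}$ are detected.

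It then remains to translate these two ideal-class conditions into forms. Under the dictionary between $\mathrm{SL}_2(\Z)$-classes of forms of discriminant $-4p$ and $\Cl(k)$, the principal class corresponds to $x^2+py^2$, whereas the unique class of order $2$ is $[\fq]$ for the ramified prime $\fq\mid 2$ (one has $\fq^2 = (2)$ and $\fq$ non-principal, since $2\ne x^2+py^2$), whose reduced form is $2x^2+2xy+\frac{p+1}2 y^2$. Hence $[\fq]^{h/4} = 1 \iff q^{h/4} = x^2+py^2$ and $[\fq]^{h/4} = c^{h/2} \iff q^{h/4} = 2x^2+2xy+\frac{p+1}2 y^2$, which yields the stated equivalence; these exhaust the possibilities because $[\fq]^{h/4}$ has order dividing $2$, i.e.\ $q$ lies in the principal genus.

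The main obstacle I anticipate is this last step: verifying cleanly that the second reduced form is exactly the order-$2$ class $[\fq]$ with $\fq\mid 2$, that $q^{h/4}$ necessarily represents one of these two forms, and that the assignment is $+1\leftrightarrow$ principal, $-1\leftrightarrow$ non-principal. The cyclicity of $K/k$ in the first step is the other point requiring genus-theoretic care; once Bernstein's law and the form dictionary are in hand, the remainder is formal.
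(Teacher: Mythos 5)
Your proposal is correct, and its reciprocity half coincides with the paper's: both identify the unique order-$2$ class with $[\ftw]$ for $\ftw = (2,1+\sqrt{-p}\,)$, derive the form $2x^2+2xy+\frac{p+1}2 y^2$ from a norm computation on that ideal, and then read the statement off Bernstein's law over $k'$ (the paper writes the symbol as the fourth-power symbol $(\eps^2/\cdot)_4$, you as the quadratic symbol $(\eps/\cdot)$ combined with the transfer formula; these are the same thing). The genuine difference is the field-theoretic step. The paper obtains $K = k'(\sqrt{\eps}\,) = k'(\sqrt{a+2bi}\,)$ from the classical fact (Brandler \cite{Bran}, \cite[Ex.~5.10]{LRL}) that $(a+2bi)\eps$ is a square in $\Q(i,\sqrt p\,)$, deduces unramifiedness at $2$ from $b$ being even and at $p$ from $k'(\sqrt{a+2bi}\,) = k'(\sqrt{a-2bi}\,)$, and leaves cyclicity to the uniqueness of the quartic unramified extension of a field with cyclic $2$-class group. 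You argue intrinsically --- $\eps$ is singular (a unit) and primitive, and cyclicity follows from the explicit order-$4$ automorphism $\tau$ with $\tau^2(\sqrt\eps\,) = -\sqrt\eps$ --- which is more self-contained, avoids citing Brandler, and makes the cyclicity claim explicit where the paper leaves it implicit. One correction to your sketch, though: primitivity is not a consequence of $N\eps = -1$. What makes $\eps$ a square mod $4$ in $k'$ is that $p \equiv 1 \bmod 8$ forces $2$ to split in $\Q(\sqrt p\,)$, so that at each prime of $k'$ above $2$ (with completion $\Q_2(i)$) the unit $\eps$ is congruent to a rational $2$-adic unit, hence to $\pm 1 \bmod 4$, and both values are squares there because $-1 = i^2$; the norm condition $N\eps = -1$ is used only in your $\tau$-computation (where it is genuinely needed). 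Finally, both you and the paper tacitly require the representations $q^{h/4} = Q(x,y)$ to be primitive once $h > 4$ (otherwise $q^{h/4}$ with $h/4$ even is trivially $x^2 + p\cdot 0^2$); you rightly flag this as the delicate point of the form dictionary.
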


\begin{proof}
Recall (see \cite{Bran} or \cite[Ex. 5.10]{LRL}) that $(a+2bi)\eps$
is a square in $\Q(i,\sqrt{p}\,)$; this shows that 
$K = k'(\sqrt{\eps}\,) = k'(\sqrt{a+2bi}\,)$. Since $b$ is even, the
extension $K/k'$ is unramified at the primes above $2$, and the fact
that $k'(\sqrt{a+2bi}\,) =  k'(\sqrt{a-2bi}\,)$ shows that primes above
$p$ cannot ramify either. Thus $K'/k$ is unramified everywhere.

The $2$-class group $\Cl_2(k)$ is cyclic by Gauss's genus theory. The
unique element of order $2$ in the ideal class group of $k$ is represented
by the ideal $\ftw = (2,1+\sqrt{-p}\,)$. If $\fq$ is a prime ideal with
odd prime norm  $q$ in the class $[\ftw]$, then $\ftw \fp$ is principal,
and taking norms shows that $2q = w^2 + py^2$. Since $q$ and $y$ must be
odd, we can set $w = 2x+y$ and find, after cancelling the common factor
$2$ from both sides, that $q = 2x^2 + 2xy + \frac{p+1}2 y^2$. 

For applying Bernstein's reciprocity law observe that 
$K = k'(\sqrt[4]{\eps^2}\,)$, and that the fourth power residue
symbols $(\eps^2/\fp)_4$ are just quadratic residue symbols $(\eps/\fp)$.
\end{proof}

\medskip\noindent{\bf Example.}
Consider the number field $k = \Q(\sqrt{-17}\,)$. Its class group 
$\Cl(k)$ is cyclic of order $4$ and is generated by the class of 
the prime ideal $\fq = (3,1+\sqrt{-17},)$. Observe that 
$\eps = 4+\sqrt{17} = \eta^2$ for 
$\eta = \frac{\sqrt{1+4i} + \sqrt{1-4i}}{1-i}$.

Scholz's reciprocity law (see \cite[Ch. 5]{LRL}) shows that
$(\eps/q) = (q/17)_4(17/q)_4$ for primes $q \equiv 1 \bmod 4$
with $(\frac{17}{q}) = 1$, hence $(\eps/q) = +1$ if $q = x^2 + 17y^2$,
and $(\eps/q) = -1$ if $q = 2x^2 + 2xy + 9y^2$. This result is due
to Dirichlet \cite[\S\ 4]{Diri} and was rediscovered by Brandler
(see \cite{Bran} and \cite[Ch. 5]{LRL}).

\subsection{Kronecker}
Let $a$ be an odd positive integer, and assume that $m = 3a^2 \pm 4$
is squarefree. Then 
$$ \eps = \frac13\Big(\frac{3a + \sqrt{3m}}2\,\Big)^2 
        = \pm 1 + 3a \frac{a + \sqrt{3m}}2 $$
is a unit in the ring of integers of $k = \Q(\sqrt{3m}\,)$.
If $a = 3b$ and $m = 27b^2 \pm 4$, then $K' = k'(\sqrt[3]{\eps}\,)$
is a cubic unramified Kummer extension of $k' = \Q(\sqrt{m},\sqrt{-3}\,)$.
In fact since $(k':k) = 2$, the unit $\eps$ is a cube in $k'$ if and 
only if it is a cube in $k$, which in turn is equivalent to 
$3(3a+\sqrt{3m}\,)$ being a cube in $\cO_k = \Z[\frac{1+\sqrt{3m}}2\,]$. 
The equation $9a + 3\sqrt{3m} = \big(\frac{r+s\sqrt{3m}}2 \big)^3$
leads to 
$$ 9a = r(r^2 + 9rs^2m) \quad \text{and} \quad
    3 = 3s(3r^2 + ms^2), $$
which is easily seen to be impossible.

The cyclic unramified extension $K'/k'$ descends to $F =\Q(\sqrt{-m}\,)$
in the sense that the abelian extension $K'/F$ contains a cyclic cubic 
unramified subextension $L/F$. We can construct this extension
explicitly be setting $\theta = \sqrt[3]{\eps} + \sqrt[3]{\eps'}$;
in fact we find
$$ \theta^3 = \eps + \eps' + 
         3\sqrt[3]{\eps\eps'}( \sqrt[3]{\eps} + \sqrt[3]{\eps'})
           = 27b^2 \pm 2 + 3\theta, $$
hence $L$ is the compositum of $F$ and the cubic extension
generated by a root $\theta$ of the polynomial $x^3 - 3x - 27b^2 \mp 2$.
   
Here are a few small examples of odd values of $b$ for which 
$m = 27b^2 - 4$ is squarefree; in the table below, $h$ denotes
the class number of the quadratic number field $\Q(\sqrt{-m}\,)$.
$$ \begin{array}{c|rrrrr}
    b &   1 &    3 &    5 &     7 &     9  \\ \hline  
   -m & -23 & -239 & -671 & -1319 & -2183  \\
    h &   3 &   15 &   30 &    45 &    42
   \end{array} $$

Here is a similar table for $m = 27b^2 + 4$:
$$ \begin{array}{c|rrrrr}
    b &   1 &    3 &    5 &     7 &     9  \\ \hline  
   -m & -31 & -247 & -679 & -1327 & -2191  \\ 
    h &   3 &    6 &   18 &    15 &    30 
   \end{array} $$

Applying Bernstein's reciprocity law to the two examples $b = 1$
with class number $3$ we find

\begin{prop}
Let $\eps = \frac{25+3\sqrt{69}}2$ be the fundamental unit of
$k = \Q(\sqrt{69}\,)$. Then 
$$ \Big(\frac{\eps}{p}\Big)_3 = 1 \quad \iff \quad
     p = x^2 + xy + 6y^2 $$
for primes $p$ with $(\frac{-3}p) = (\frac{-23}p) = 1$.

Similarly, let $\eta = \frac{29+3\sqrt{93}}2$ be the fundamental unit of
$k = \Q(\sqrt{93}\,)$. Then 
$$ \Big(\frac{\eps}{p}\Big)_3 = 1 \quad \iff \quad
     p = x^2 + xy + 8y^2 $$
for primes $p$ with $(\frac{-3}p) = (\frac{-31}p) = 1$.
\end{prop}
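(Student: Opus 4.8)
The plan is to read both assertions as the two cases $m=23$ and $m=31$ of a single application of Bernstein's reciprocity law to the imaginary quadratic field $F=\Q(\sqrt{-m}\,)$, whose class group is cyclic of order $3$ by the tables above. First I would fix the field diagram. Set $F'=F(\zeta_3)=\Q(\sqrt{-m},\sqrt{-3}\,)$; since $(-m)(-3)=3m$, this equals $\Q(\sqrt{3m},\sqrt{-3}\,)$, so it contains $k=\Q(\sqrt{3m}\,)=\Q(\sqrt{69}\,)$ (resp. $\Q(\sqrt{93}\,)$) and hence the unit $\eps$. Let $L$ be the Hilbert class field of $F$, the unique cyclic cubic unramified extension of $F$. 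The first real task is to identify $\eps$ as the Kummer generator: the discussion preceding the proposition shows that $F'(\sqrt[3]{\eps}\,)/F'$ is cyclic, cubic and unramified and that it descends to a cubic unramified extension of $F$; since $\Cl(F)\cong\Z/3\Z$ forces that extension to be $L$, a degree count gives $F'(\sqrt[3]{\eps}\,)=LF'$, so $\eps$ plays the role of $\omega$ in Bernstein's reciprocity law.

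Next I would compute the index $\nu=(L\cap F':F)$. As $[L:F]=3$ and $[F':F]=2$ are coprime we have $L\cap F'=F$, so $\nu=1$ and Bernstein's reciprocity law applies in its sharpest form: for a prime ideal $\fp'$ of $F'$ coprime to $3\eps$,
$$ \Big(\frac{\eps}{\fp'}\Big)_3=\zeta_3^{\,e}\quad\Longleftrightarrow\quad [N_{F'/F}\fp']=c^{\,e}, $$
where $c$ generates $\Cl(F)$. Here I would stress that although the law is stated with a normalised $\omega$ (the power of $\eps$ whose symbol at a fixed generating class equals $\zeta_3$), the assertion we want concerns only the case $e=0$, i.e. the vanishing of the symbol; and whether $(\eps/\fp')_3=1$ is plainly unchanged when $\eps$ is replaced by a coprime power $\eps^{b}$ with $b\in\{1,2\}$. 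Hence no normalisation is needed and I may work with $\eps$ itself.

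The third step converts the right-hand side into the representation condition. The hypotheses $(\frac{-3}p)=(\frac{-m}p)=1$ force $(\frac{3m}p)=1$ as well, so $p$ splits completely in the biquadratic field $F'$; a prime $\fp'$ of $F'$ above $p$ then has residue degree $1$ and $N_{F'/F}\fp'=\fp$ for a degree-one prime $\fp$ of $F$ over $p$. Thus $(\eps/\fp')_3=1$ if and only if $\fp$ lies in the principal class, and under the classical dictionary between ideal classes of $F$ and classes of binary quadratic forms of discriminant $-m$ the principal class corresponds to the principal form; so this happens precisely when $p$ is represented by $x^2+xy+6y^2$ (resp. $x^2+xy+8y^2$), the principal form of discriminant $-23$ (resp. $-31$).

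Finally I would settle well-definedness, which I expect to be the main obstacle: there are four primes of $F'$ above $p$, and I must check that the truth value of ``$(\eps/\fp')_3=1$'' is independent of the choice among them. Using the Galois equivariance $(\eps^\tau/\fp'^\tau)_3=(\eps/\fp')_3^\tau$ for $\tau\in\Gal(F'/\Q)$, together with the facts that complex conjugation fixes $\eps\in\Q(\sqrt{3m}\,)$ while inverting $\zeta_3$, and that the nontrivial element of $\Gal(F'/F)$ inverts both $\eps$ and $\zeta_3$, one finds that the symbol takes the values $\zeta_3^{\pm e}$ on the four primes, the two signs corresponding to whether the underlying prime of $F$ is $\fp$ (of class $c^{e}$) or its conjugate $\bar\fp$ (of class $c^{-e}$). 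In particular the symbol equals $1$ at one prime above $p$ if and only if it equals $1$ at all of them, so $(\eps/p)_3=1$ is meaningful and equivalent to $e=0$; combining this with the three preceding steps yields the proposition, and the same argument proves the $\Q(\sqrt{93}\,)$ statement verbatim, with $\eta$ in place of $\eps$.
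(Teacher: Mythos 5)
Your proposal is correct and takes essentially the same route as the paper: both identify $F'(\sqrt[3]{\eps}\,)$ as the lift of the Hilbert class field of $F=\Q(\sqrt{-23}\,)$ (resp.\ $\Q(\sqrt{-31}\,)$) via the construction preceding the proposition, apply Bernstein's reciprocity law over $F'=F(\zeta_3)$, and translate principality of $N_{F'/F}\fp'$ into representation of $p$ by the principal form $x^2+xy+6y^2$ (resp.\ $x^2+xy+8y^2$). The additional details you supply --- that $\nu=1$ because $\gcd(2,3)=1$, that no normalization of the Kummer generator is needed since only the value $e=0$ is at issue, and the Galois-equivariance argument showing that $(\eps/p)_3$ is well defined on the four primes above $p$ --- are precisely the points the paper delegates to its general Case~II discussion and its subsection on power residue symbols, so they elaborate the paper's terse argument rather than diverge from it.
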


The last example was essentially discovered by Kronecker, who
considered in \cite{Kron} the splitting field $L$ of the
polynomial $f(x) = (x^3-10x)^2 + 31(x^2-1)^2$. This number field $L$ 
has degree $6$ and Galois group $S_3$; it is the compositum of the
complex quadratic number field $\Q(\sqrt{-31}\,)$ and the cubic field 
with discriminant $-31$ generated by a root of $x^3 + 11x^2 + 38x + 31$.
Write $\omega = \frac{-1+\sqrt{-3}}2$ and $\tomega = \frac{-1+\sqrt{-31}}2$;
then $\eta = 1 - \tomega + 3\omega = \frac12(3\sqrt{-3} + \sqrt{-31}\,)$
is a unit in $L' = L(\sqrt{-3}\,)$, and $\eta^2 = \frac12(-29+\sqrt{93}\,)$ 
actually shows that $\eta$ is the fundamental unit in 
$\Q(\sqrt{-3},\sqrt{-31}\,)$. 

Now prime ideals $\fp$ in $k$ above primes $p$ with $(\frac{-31}p) = +1$ 
split in completely in the Hilbert class field of $k$ if and only if $\fp$ 
is principal, which is equivalent to $p$ being represented by the
principal form $x^2 + xy + 8y^2$. Since $L' = k'(\sqrt[3]{\eps}\,)$,
Bernstein's reciprocity law tells us that
$$ \Big(\frac{\eps}{p}\Big)_3 = 1  \quad \iff \quad
    p = x^2 + xy + 8y^2 $$
for primes $p$ with $(\frac{-31}p) = (\frac{-3}p) = +1$.

A general result containing Kronecker's example as a special case
can be found e.g. in Weinberger \cite{Weinb}.

\subsection{Quartic Character of Certain Quadratic Units}

Let $p \equiv 5 \bmod 8$ and $q \equiv 3 \bmod 4$ be primes such 
that $(\frac pq) = +1$. The $2$-class group of the complex quadratic 
number field $k = \Q(\sqrt{-pq}\,)$ is cyclic of order divisible by $4$,
hence $k$ admits a cyclic quartic unramified extension $K/k$. Over
$k' = k(i)$, this extension can be realized as a Kummer extension:

\begin{prop}
Let $\eps$ denote the fundamental unit of $F = \Q(\sqrt{pq}\,)$, where $p$
and $q$ are as above. Then $\eps \equiv s \bmod 4$ for $s \in \{\pm 1\}$,
and $K' = k'(\sqrt[4]{\eps}\,)$. 
\end{prop}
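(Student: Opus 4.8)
The plan is to treat the two assertions in turn. For the congruence, since $pq\equiv3\bmod4$ the ring of integers of $F$ is $\Z[\sqrt{pq}\,]$, so I would write $\eps=x+y\sqrt{pq}$ with $x,y>0$ and first pin down the norm: a relation $x^2-pq\,y^2=-1$ would give $x^2\equiv-1\bmod q$, which is impossible because $q\equiv3\bmod4$. Hence $N_{F/\Q}\eps=+1$ and $x^2-pq\,y^2=1$. Reducing modulo $4$ gives $x^2+y^2\equiv1$, so exactly one of $x,y$ is odd; and if $x$ is odd then $x^2\equiv1\bmod8$, whence $pq\,y^2=x^2-1\equiv0\bmod8$ forces $4\mid y$ (as $pq$ is odd) and therefore $\eps\equiv x\equiv\pm1\bmod4$. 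Thus the whole congruence reduces to showing that $x$ is odd, and we may then take $s=x\bmod4$.

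To show $x$ is odd I would argue by contradiction, assuming $x$ even and $y$ odd. Then $x-1$ and $x+1$ are coprime positive odd integers with $(x-1)(x+1)=pq\,y^2$, so their squarefree kernels partition $\{p,q\}$: one is led to $(x-1,x+1)=(d_1u^2,d_2v^2)$ with $(d_1,d_2)$ one of $(1,pq),(p,q),(q,p),(pq,1)$ and $d_2v^2-d_1u^2=2$. In each case, reducing modulo $p$ or modulo $q$ expresses one of $\pm2$ or $\pm2q^{-1}$ as a square, and the resulting Legendre condition is contradicted by the hypotheses: in the mixed cases $(p,q)$ and $(q,p)$ one obtains $\art{2}{p}\art{p}{q}=1$, impossible since $\art{2}{p}=-1$ (because $p\equiv5\bmod8$) while $\art{p}{q}=+1$; in the cases $(1,pq)$ and $(pq,1)$ one obtains an incompatibility between the forced value of $pq\bmod8$ and the value of $\art{-2}{q}$ or $\art{2}{q}$. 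I expect this four-case bookkeeping to be entirely routine once the congruences are inserted, so $x$ is odd and the first assertion follows.

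For $K'=k'(\sqrt[4]{\eps}\,)$ I would proceed as in the Dirichlet proposition above. The unit $\eps$ is singular, and it is not a square in $k'$: if $\eps=\gamma^2$, then applying the nontrivial element of $\Gal(k'/F)$ (which fixes $\eps$) forces $\gamma\in F$ or $\gamma\in iF$, i.e. $\eps$ or $-\eps$ is a square in the real field $F$, both absurd; the same computation shows $\eps^2$ is not a fourth power, so $[k'(\sqrt[4]{\eps}\,):k']=4$. Granting that $k'(\sqrt[4]{\eps}\,)/k'$ is unramified (discussed below), set $K''=k'(\sqrt[4]{\eps}\,)$; it is an unramified cyclic quartic extension of $k'$. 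Since $N\eps=1$ gives $\eps'=\eps^{-1}$, the conjugates of $\sqrt[4]{\eps}$ over $k$ are the $i^{\,j}\sqrt[4]{\eps}^{\pm1}$, all lying in $K''$, so $K''/k$ is Galois; a direct check shows that the Kummer automorphism $\sigma:\sqrt[4]{\eps}\mapsto i\sqrt[4]{\eps}$ of order $4$ commutes with the lift $\rho$ of the generator of $\Gal(k'/k)$, because $\rho$ inverts both $i$ and $\eps$, so $\Gal(K''/k)\cong C_4\times C_2$. Now $k'/k$ is ramified above $2$ while $K''/k'$ is not, so the inertia group $T$ at a prime above $2$ has order $2$ and meets $\langle\sigma\rangle=\Gal(K''/k')$ trivially; hence $K''^{\,T}$ is a cyclic quartic extension of $k$ unramified at $2$, and unramified everywhere since $\eps$ is a unit. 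By uniqueness of the cyclic quartic unramified extension (the $2$-class group of $k$ being cyclic), $K''^{\,T}=K$, so $K\subseteq K''$ and comparison of degrees over $k'$ gives $K''=Kk'=K'$.

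The main obstacle is the step I deferred: proving that $k'(\sqrt[4]{\eps}\,)/k'$ is genuinely unramified at the primes above $2$. For the quadratic subextension $k'(\sqrt{\eps}\,)/k'$ the congruence $\eps\equiv\pm1\bmod4$ from the first part already suffices, since $+1$ and $-1=i^2$ are both squares modulo $4$ in $k'=k(i)$; but the upper step $k'(\sqrt[4]{\eps}\,)/k'(\sqrt{\eps}\,)$ demands that $\sqrt{\eps}$ itself be congruent to a square modulo $4$ in $k'(\sqrt{\eps}\,)$, a finer $2$-adic condition. I expect this to require an explicit factorization of $\eps$ over $k'$ analogous to Dirichlet's identity $(a+2bi)\eps=\square$, for instance writing $p=A^2+4C^2$ with $A,C$ odd and exhibiting $\sqrt[4]{\eps}$ (or $\sqrt{\eps}$) in terms of an element of $k'$ whose behaviour at the primes above $2$ is transparent; securing this $2$-adic input is the crux of the proof.
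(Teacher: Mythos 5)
Your first half is correct and is actually more careful than the paper's own proof: the paper simply asserts that $T^2-pqU^2=1$ forces $T$ odd and $U\equiv 0\bmod 4$, but since $pq\equiv 3,7\bmod 8$ the pattern ($T$ even, $U$ odd) is perfectly consistent modulo $8$, so the four-case descent on $(T-1)(T+1)=pqU^2$ that you carry out, using $\art{2}{p}=-1$ and $\art{p}{q}=+1$, is genuinely needed, and your case eliminations are right. Likewise your Galois-theoretic frame for the second half --- $[k'(\sqrt[4]{\eps}\,):k']=4$, $\Gal(K''/k)\cong C_4\times C_2$ for $K''=k'(\sqrt[4]{\eps}\,)$, descent through the inertia field at $2$, and uniqueness of the cyclic quartic unramified extension because $\Cl_2(k)$ is cyclic --- is exactly what the paper compresses into ``the claim follows since $k$ has a cyclic $2$-class group.'' The genuine gap is the step you defer: that $k'(\sqrt[4]{\eps}\,)/k'$ is unramified at the primes above $2$. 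This is not a loose end but the entire arithmetic content of the proposition (the paper's ``routine calculation''); every other step of yours consumes this fact, none produces it. Nor can it follow softly from $\eps\equiv\pm 1\bmod 4$: since $\zeta_8\notin k'$, the fields $k'(\sqrt[4]{\eps}\,)$ and $k'(\sqrt[4]{-\eps}\,)=k'(\zeta_8\sqrt[4]{\eps}\,)$ are distinct, and at most one of them can be unramified above $2$ --- if both were, their compositum would contain $k'(\zeta_8)$, which is ramified there. A criterion blind to the sign of $\eps$ therefore cannot settle the quartic ramification; this is also why the following proposition in the paper is phrased in terms of $s\eps$ rather than $\eps$.

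The frustrating part is that your own machinery delivers the missing input. Run your four-case splitting once more on $(T-1)(T+1)=pqU^2$, now with $T$ odd, so that $\gcd(T-1,T+1)=2$ and $\{T-1,T+1\}=\{2d_1u^2,2d_2v^2\}$ with $d_1d_2=pq$: the cases $(1,pq)$ and $(pq,1)$ make $\eps$ a square in $F$, contradicting fundamentality, and $(p,q)$ is killed by $\art{-p}{q}=-1$; hence $T-1=2qu^2$, $T+1=2pv^2$, so $pv^2-qu^2=1$, $\sqrt{\eps}=v\sqrt{p}+u\sqrt{q}$, and $q\eps=(qu+v\sqrt{pq}\,)^2$. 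This is precisely the analogue of Dirichlet's identity $(a+2bi)\eps=\square$ that you were asking for: it identifies the quadratic layer $k'(\sqrt{\eps}\,)=k'(\sqrt{p}\,)=k'(\sqrt{q}\,)$ as the lift of the genus field, and it hands you explicit integers $u,v$ (with $uv$ even, since $U=2uv\equiv 0\bmod 4$) from which the behaviour of the top layer above $2$ can be checked by congruences in $\Q(i,\sqrt{p},\sqrt{q}\,)$. One further repair: when $q\equiv 3\bmod 8$ we have $-pq\equiv 1\bmod 8$, so $2$ splits in $k$ and ``the'' inertia group at $2$ is not unique; your descent needs the two inertia subgroups to coincide, since if one were $\la\rho\ra$ and the other $\la\sigma^2\rho\ra$, the maximal subextension of $K''/k$ unramified everywhere would only be quadratic. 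This is fixed by noting that $K''/\Q$ is Galois and complex conjugation (which interchanges the two primes of $k$ above $2$ and fixes the real number $\sqrt[4]{\eps}$) centralizes both $\rho$ and $\sigma^2\rho$, so each candidate quartic subfield is conjugation-stable and ramifies at one prime above $2$ if and only if it ramifies at the other.
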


\begin{proof}
Write $\eps = T + U \sqrt{pq}$; from $T^2 - pqU^2 = 1$ it follows
that $T$ is odd and $U \equiv 0 \bmod 4$, hence $\eps \equiv \pm 1 \bmod 4$.
By a routine calculation one verifies that $k'(\sqrt[4]{\eps}\,)/k$ is a
cyclic quartic unramified extension, from which the claim follows
since $k$ has a cyclic $2$-class group.
\end{proof}

The $2$-class group $\Cl_2(k)$ of $k = \Q(\sqrt{-pq}\,)$, where 
$p \equiv 5 \bmod 8$ and $q \equiv 3 \bmod 4$ are prime, is cyclic
of order divisible by $4$. The principal class of in the form class
group with discriminant $-pq$ is represented by $Q_0(x,y) = x^2 + xy + my^2$
with $m = \frac{pq+1}4$, and the unique class of order $2$ by 
$Q_1(x,y) = qx^2 + qxy + ny^2$ with $n = \frac{p+q}4$. Bernstein's
reciprocity law applied to $k$ now immediately gives the following

\begin{prop}
Let $p \equiv 5 \bmod 8$ and $q \equiv 3 \bmod 4$ be primes such that
$(\frac pq) = +1$, and let $\eps$ denote the fundamental unit of
$F = \Q(\sqrt{pq}\,)$. Choose $s \in \{ \pm 1\}$ such that
$s\eps \equiv 1 \bmod 4$, and set $pq = 4m-1$ and $p+q = 4n$. Then
$$ \Big(\frac{s\eps}\ell\Big)_4 = \begin{cases}
               +1 & \text{ if } \ell^{h/4} =  x^2 + xy + my^2, \\
               -1 & \text{ if } \ell^{h/4} = qx^2 + qxy + ny^2             
                         \end{cases} $$
for all primes $\ell$ that split in $F' = \Q(i,\sqrt{pq}\,)$, 
where $h$ denotes the class number of $k = \Q(\sqrt{-pq}\,)$.
\end{prop}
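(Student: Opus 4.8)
The plan is to feed this situation into Bernstein's reciprocity law, exploiting that here we are in the case $\nu = 1$. First I would record the field diagram. Since $p \equiv 5 \bmod 8$ and $q \equiv 3 \bmod 4$, the discriminant of $k = \Q(\sqrt{-pq}\,)$ is $-pq \equiv 1 \bmod 4$, so $2$ is unramified in $k$, whereas $k' = k(i) = F'$ is \emph{ramified} over $k$ at $2$. Consequently $K \cap k' = k$ for the cyclic quartic unramified field $K/k$, the extension $K' = Kk' = k'(\sqrt[4]{s\eps}\,)$ is genuinely quartic over $k'$, and $\Gal(K'/k') \cong \Gal(K/k)$ is cyclic of order $4$. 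The reason for passing from $\eps$ to $s\eps$ is exactly the hypothesis $s\eps \equiv 1 \bmod 4$: this makes the unit $s\eps$ singular \emph{and primitive}, so that $k'(\sqrt[4]{s\eps}\,)/k'$ is the unramified lift of $K/k$ (the content of the preceding proposition) and, by the Lemma, the symbol $\big(\frac{s\eps}{\fP}\big)_4$ is defined for every prime $\fP \nmid 2$ of $k'$.

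Next I would identify the symbol with a Frobenius. For a prime $\ell$ that splits completely in $F' = k'$, choose a prime $\fP$ of $k'$ above $\ell$; it has degree one, and $\fp := N_{k'/k}\fP$ is a degree-one prime of $k$ above $\ell$ with $f(\fP \mid \fp) = 1$. Exactly as in the proof of Thm.~\ref{TB1}, applying the Frobenius $\sigma = \big(\frac{K'/k'}{\fP}\big)$ to $\sqrt[4]{s\eps}$ gives $\big(\frac{s\eps}{\fP}\big)_4 = i^e$, where $i^e$ records $\sigma$ under $\Gal(K'/k') \cong \mu_4$. The transfer formula (\ref{Atrans}) with $f = 1$ then yields $\sigma|_K = \big(\frac{K/k}{\fp}\big)$, so through the Artin isomorphism $\Cl(k) \to \Gal(K/k)$ the value $i^e$ is precisely the image of $[\fp]$ in the order-$4$ quotient. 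This is the quartic instance of Bernstein's reciprocity law:
\[
  \Big(\frac{s\eps}{\fP}\Big)_4 = i^e \quad\Longleftrightarrow\quad [\fp]^{\,h/4} = c^e ,
\]
where $c$ has order $4$ in $\Cl(k)$.

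It then remains to translate the two cases $e \in \{0,2\}$ via genus theory. The class $[\fp]^{h/4}$ lies in the unique cyclic subgroup $\la c \ra$ of order $4$ of $\Cl(k)$ (unique because the $2$-class group is cyclic and $4 \mid h$). Its identity corresponds to the principal form $x^2 + xy + my^2$ with $pq = 4m-1$, and its unique element $c^2$ of order $2$ is the ambiguous class, which by Gauss's genus theory is represented by $qx^2 + qxy + ny^2$ with $p+q = 4n$, as recalled just before the statement. Hence $e = 0 \iff [\fp]^{h/4} = 1 \iff \ell^{h/4} = x^2 + xy + my^2$, giving $\big(\frac{s\eps}{\fP}\big)_4 = +1$; and $e = 2 \iff [\fp]^{h/4} = c^2 \iff \ell^{h/4} = qx^2 + qxy + ny^2$, giving $\big(\frac{s\eps}{\fP}\big)_4 = -1$. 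In both cases the value lies in $\{\pm 1\}$; combined with the invariance (\ref{EBprs}) under $\Gal(k'/k)$ and the relation $\big(\frac{s\eps}{\fP^\sigma}\big)_4 = \big(\frac{s\eps}{\fP}\big)_4^{-1}$ supplied by the complex conjugation $\sigma$ (which fixes $s\eps \in F$ while inverting $\mu_4$), this shows the symbol takes the same value on every prime of $k'$ above $\ell$. Thus $\big(\frac{s\eps}{\ell}\big)_4$ is well defined and equals the asserted value.

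I expect the main obstacle to be the $2$-adic bookkeeping in the first paragraph: one must verify, from $T^2 - pqU^2 = 1$ (so $T$ odd, $U \equiv 0 \bmod 4$) together with $p \equiv 5 \bmod 8$ and $q \equiv 3 \bmod 4$, that $s\eps \equiv 1 \bmod 4$ really is \emph{primitive}, so that $k'(\sqrt[4]{s\eps}\,)$ — and not $k'(\sqrt[4]{\eps}\,)$ — is the unramified quartic lift of $K$ and no spurious factor $\big(\frac{s}{\fP}\big)_4$ intrudes into the Frobenius computation. Everything else is a direct application of the transfer formula and the genus-theoretic dictionary, both already available.
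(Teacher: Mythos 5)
Your proposal is correct and takes essentially the same route as the paper: the paper gives no separate argument for this proposition, asserting that it follows ``immediately'' from Bernstein's reciprocity law (i.e.\ the transfer formula combined with the Artin isomorphism, as in Thm.~\ref{TB1} and Thm.~\ref{TBsp}), the preceding proposition identifying $K' = k'(\sqrt[4]{s\eps}\,)$ as the unramified quartic lift, and the genus-theoretic identification of the principal and ambiguous form classes---which is exactly what you unwind. Your extra care about $\nu = 1$ (since $k'/k$ is ramified at $2$ while $K/k$ is unramified) and about well-definedness of $\big(\frac{s\eps}{\ell}\big)_4$ (the values $\pm 1$ are invariant under all of $\Gal(k'/\Q)$, including complex conjugation, so no normalization of the generator $c$ is needed when only $e \in \{0,2\}$ is at stake) fills in details the paper leaves implicit.
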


From $\ell \equiv 1 \bmod 4$ we see that $x$ must be odd and $y = 2z$
must be even; thus 
\begin{align*}
  x^2 +  xy + my^2 & = x^2 + 2xz + 4mz^2 = (x+z)^2 + pqz^2, \\
 qx^2 + qxy + ny^2 & = qx^2 + 2qxz + 4nz^2 = q(x+z)^2 + pz^2,
\end{align*}
hence our result can also be stated in the form
$$  \Big(\frac{s\eps}\ell\Big)_4 = \begin{cases}
               +1 & \text{ if } \ell^{h/4} =  x^2 + pqy^2, \\
               -1 & \text{ if } \ell^{h/4} = qx^2 +  py^2.             
                         \end{cases} $$

\medskip\noindent{\bf Example 1.}
In the simplest example $p = 13$, $q = 3$ we find $\eps = 25 + 4 \sqrt{39}$,
$s = 1$ and $h = 4$, hence
$$  \Big(\frac{\eps}\ell\Big)_4 = \begin{cases}
               +1 & \text{ if } \ell =  x^2 + 39y^2, \\
               -1 & \text{ if } \ell = 3x^2 + 13y^2.             
                         \end{cases} $$
Since $\ell \equiv 1 \bmod 4$ we see that $2 \mid y$ in the first and
$2 \mid x$ in the second case.

The first few primes $\ell \equiv 1 \bmod 4$ represented by 
$Q_1(x,y) = x^2 + 39 y^2$ and $Q_2(x,y) = 3x^2 + 13y^2$ are
$$ \begin{array}{c|cccccc}
  \rsp \ell  &    61    &    157   &    181   & 277       & 313 & 337 \\ \hline
  \rsp   Q   & Q_2(4,1) & Q_1(1,2) &  Q_1(5,4) & Q_1(11,2) & Q_2(10,1) 
             & Q_2(2,5) \\
   (\eps/\ell)_4 & -1 & +1 & +1 & +1 & -1 & -1    
   \end{array} $$

\medskip\noindent{\bf Example 2.}
If $p = 37$ and $q = 3$, then $\eps = 295 + 28 \sqrt{111}$, hence $s = -1$.
Here we have $h = 8$, and we find the following results:
$$  \Big(\frac{-\eps}\ell\Big)_4 = \begin{cases}
               +1 & \text{ if } \ell^2 =  x^2 + 111y^2, \\
               -1 & \text{ if } \ell^2 = 3x^2 + 37y^2.             
                         \end{cases} $$
Composition of forms shows that this result is equivalent to 
$$  \Big(\frac{-\eps}\ell\Big)_4 = \begin{cases}
               +1 & \text{ if } \ell =  x^2 + 111y^2, 3x^2 + 37y^2 \\
               -1 & \text{ if } \ell^2 = 4x^2 \pm xy + 7y^2.             
                         \end{cases} $$
The first few primes $\ell \equiv 1 \bmod 4$ represented by
$Q_1(x,y) = x^2 + 111y^2$, $Q_2(x,y) = 3x^2 + 37y^2$,  
$Q_4 = 4x^2 + xy + 7y^2$ and $Q_4' = 4x^2 - xy + 7y^2$ are
$$ \begin{array}{c|ccccc}
  \rsp \ell           &  73   & 157  &  181  &  229   &  337 \\ \hline
  \rsp  Q             & Q_4'(2,3) & Q_4(6,1) & Q_4(2,5) & Q_2(8,1) & Q_2(10,1) \\
  \rsp (-\eps/\ell)_4 &   -1  & -1 & -1 & +1 & +1 
   \end{array} $$

\bigskip


\begin{thebibliography}{99}

\bibitem{ArtL} E. Artin,
{\em \"Uber eine neue Art von L-Reihen},
Abh. Semin. Hamburg {\bf 3} (1924), 89--108
%

\bibitem{Bern1} F. Bernstein,
{\em \"Uber den Klassenk\"orper eines algebraischen Zahlk\"orpers},
Erste u. zweite Mitteilung, G\"ott. Nachr. (1903), 46--58; 304--311
%

\bibitem{Bern2} F. Bernstein,
{\em \"Uber unverzweigte Abelsche K\"orper (Klassenk\"orper)
     in einem imagin\"aren Grund\-be\-reich},
     Jahresber. DMV {\bf 13} (1904), 116--119
%

\bibitem{Bran} J. Brandler,
{\em Residuacity properties of real quadratic units},
J. Number Theory {\bf 5} (1973), 271--287
%

\bibitem{Diri} P.G.L. Dirichlet,
{\em Untersuchungen \"uber die Theorie der quadratischen Formen},
Abh. K\"onigl. Preuss. Akad. Wiss. 1833, 101--121; Werke I, 195--218
%

\bibitem{HaKo} F. Halter-Koch,
{\em Konstruktion von Klassenk\"orpern und Potenzrestkriterien
     f\"ur quadratische Einheiten}, 
manuscr. math. {\bf  54} (1986), 453--492
%

\bibitem{HZB} D. Hilbert,
{\em Die Theorie der algebraischen Zahlk\"orper},
Jahresber. DMV 1897, 175--546; Gesammelte Abh. I, 63--363;
Engl. Transl. by I. Adamson, Springer-Verlag 1998
%

\bibitem{Kron} L. Kronecker,
{\em Ueber die Potenzreste gewisser complexer Zahlen},
Monatsber. Berlin (1880), 404--407; Werke II, 95--101
%


\bibitem{LRL} F. Lemmermeyer,
{\em Reciprocity Laws. From Euler to Eisenstein},
Springer-Verlag 2000
%

\bibitem{FB1} F. Lemmermeyer,
{\em Harbingers of Artin's Reciprocity Law}, I, II, III
%

\bibitem{Wash} L. Washington,
{\em Introduction to Cyclotomic Fields}, 
Springer-Verlag 1982
%

\bibitem{Weinb} P. Weinberger,
{\em The cubic character of quadratic units},
Proc. of the Number Theory  Conf. Boulder, 
Colorado (1972), 241--242
%

\end{thebibliography}
\end{document}